\newcommand{\sinc}{\operatorname{sinc}}
\newcommand{\Dn}{{\mathbb D}^n}
\newcommand{\D}{\mathbb{D}}
\newcommand{\C}{\mathbb{C}}
\newcommand{\lone}{\vert\hspace{-.5mm}\vert\hspace{-.5mm}\vert}
\newcommand{\rone}{\vert\hspace{-.5mm}\vert\hspace{-.5mm}\vert_1}
\newtheorem{theorem}{Theorem}
\newtheorem{lemma}{Lemma}
\title{The Sidon constant for homogeneous polynomials}
\author{Joaquim Ortega-Cerd\`a}
\address{Dept.\ Matem\`atica Aplicada i An\`alisi,
Universitat  de Barcelona, Gran Via 585, 08071 Barce- lona, Spain}
\email{jortega@ub.edu}
\author{Myriam Ouna\"{\i}es}
\address{Institut de Recherche Math\'ematique Avanc\'ee,
Universit\'e de Strasbourg, 7 Rue Ren\'e Des\-car\-tes,
67084 Strasbourg CEDEX, France}
\email{ounaies@math.u-strasbg.fr}
\author{Kristian Seip}
\address{Department of Mathematical Sciences,
Norwegian University of Science and  Technology, NO--7491
Trondheim, Norway} \email{seip@math.ntnu.no} \keywords{Sidon
constant, homogeneous polynomials, Bohr radius, power series}
\subjclass[2000]{32A05, 43A46}
\date{\today}
\thanks{The first author is supported by the project MTM2008-05561-C02-01
and the third author is supported by the Research Council of
Norway grant 185359/V30.}
\begin{document}
\begin{abstract}
The Sidon constant for the index set of nonzero $m$-homogeneous
polynomials $P$ in $n$ complex variables is the supremum of the
ratio between the $\ell^1$ norm of the coefficients of $P$ and the
$H^\infty(\D^n)$ norm of $P$. We present an estimate which gives the
right order of magnitude for this constant, modulo a factor
depending exponentially on $m$. We use this result to show that the
Bohr radius for the polydisc $\D^n$ is bounded from below by a
constant times $\sqrt{(\log n)/n}$.
\end{abstract}

\maketitle

\section{Introduction}

This note presents an estimate on the Sidon constant $S(m,n)$ for
the index set of homogeneous polynomials of degree $m$ in $n$
complex variables. The result is optimal in the sense that the
exact value of $S(m,n)$ is determined up to a factor depending
exponentially on $m$. We will use this estimate to find the
precise asymptotic behavior of the $n$-dimensional Bohr radius,
which was introduced and studied by H.~Boas and D.~Khavinson
\cite{BoasKhav97}.

The Sidon constant $S(m,n)$ for the index set $\{\alpha=(\alpha_1,
\alpha_2,\ldots,\alpha_n): \ |\alpha|=m\}$ is defined in the
following way. Let
\[ P(z)=\sum_{|\alpha|=m} c_\alpha z^\alpha \]
be a homogeneous polynomial of degree $m$ in $n$ complex
variables. We let $\D^n$ denote the unit polydisc in $\C^n$ and
set
\[\| P \|_\infty=\sup_{z\in \D^n} |P(z)| \qquad \text{and} \qquad
 \lone P \rone=\sum_{|\alpha|=m}
|c_\alpha|;\] then $S(m,n)$ is the smallest constant $C$ such that
the inequality $ \lone P\rone\le C \| P \|_\infty$ holds for every
$P$. It is plain that $S(1,n)=1$ for all $n$, and this case is
therefore excluded from our discussion. Our main result is the
following estimate.

\begin{theorem} \label{sidon}
There exists an absolute constant $C$ such that the Sidon constant
$S(m,n)$ satisfies
\begin{equation} \label{sharp} S(m,n) \le C^m
\sqrt{\frac{n^{m-1}}{(m-1)!}}
\end{equation}
when $ n> m^2>1$.
\end{theorem}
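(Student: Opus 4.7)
The plan is to derive the bound from a Bohnenblust--Hille type inequality for $m$-homogeneous polynomials combined with Hölder's inequality and a Stirling-based counting of multi-indices.

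The key technical step is to establish the polynomial Bohnenblust--Hille inequality
\[
\Bigl(\sum_{|\alpha|=m}|c_\alpha|^{2m/(m+1)}\Bigr)^{(m+1)/(2m)}\le C^m\|P\|_\infty,\qquad (\ast)
\]
with an absolute constant $C$. Once $(\ast)$ is in hand, Hölder's inequality with exponents $p=2m/(m+1)$ and $p'=2m/(m-1)$ gives
\[
\lone P\rone\le\Bigl(\sum_{|\alpha|=m}|c_\alpha|^{p}\Bigr)^{1/p}\cdot\binom{n+m-1}{m}^{(m-1)/(2m)}\le C^m \binom{n+m-1}{m}^{(m-1)/(2m)}\|P\|_\infty.
\]
The hypothesis $n>m^2$ yields $\binom{n+m-1}{m}\le e\,n^m/m!$, and Stirling's formula then gives $(m!)^{(m-1)/(2m)}\asymp \sqrt{(m-1)!}$ up to a bounded multiplicative factor, producing the desired bound $C^m\sqrt{n^{m-1}/(m-1)!}$.

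The principal obstacle is the polynomial Bohnenblust--Hille inequality $(\ast)$. The classical multilinear Bohnenblust--Hille inequality is available with constant $K^m$ for the symmetric $m$-linear form $A$ attached to $P$ via $A(z,\ldots,z)=P(z)$. However, the polarization estimate $\|A\|\le(m^m/m!)\|P\|_\infty$, combined with the passage from multilinear to polynomial coefficients via $c_\alpha=\binom{m}{\alpha}a_{(\alpha)}$, naively introduces a fatal factor of order $m^{m/2}$ in $(\ast)$. Avoiding this loss---either through a more refined version of Bohnenblust--Hille that respects the symmetric structure (for instance, via a weighted Hölder step that converts the polarization blow-up into the multinomial identity $\sum_\alpha\binom{m}{\alpha}=n^m$), or through a probabilistic Kahane--Salem--Zygmund-type argument operating directly on the polynomial coefficients---constitutes the technical heart of the proof.
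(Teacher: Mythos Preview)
Your reduction is sound as far as it goes: if the polynomial Bohnenblust--Hille inequality $(\ast)$ holds with constant $C^m$, then H\"older with exponents $2m/(m+1)$ and $2m/(m-1)$, together with the Stirling arithmetic you sketch, does give the theorem. The gap is that you do not prove $(\ast)$. You correctly observe that the naive route through the multilinear Bohnenblust--Hille inequality plus polarization loses a factor of order $m^{m/2}$, and you name two possible remedies, but you carry out neither. Queff\'elec's version of $(\ast)$, cited in the introduction, comes with constants of the wrong order in $m$, and $(\ast)$ with exponential constant is at least as strong as the statement being proved. As it stands, the proposal reduces Theorem~\ref{sidon} to an unproved inequality of equal or greater difficulty.

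The paper's proof does not go through $(\ast)$, and the mechanism it uses is exactly what your outline is missing. One splits $P=T(P)+R(P)$ into its tetrahedral part (all exponents $\le 1$) and the remainder. When $n>m^2$, the number of non-tetrahedral monomials is only $O\bigl(m\,n^{m-1}/(m-1)!\bigr)$, so plain Cauchy--Schwarz bounds $\lone R(P)\rone$. For $T(P)$ one passes to the symmetric multilinear form $B$, separates a \emph{single} argument, and applies a hypercontractive Khinchine inequality to the resulting Rademacher chaos of order $m-1$; the crucial point is that Harris's inequality with $m_1=1$, $m_2=m-1$ contributes only the factor $(m/(m-1))^{m-1}\le e$, not $m^m/m!$. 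A number-theoretic averaging lemma shows $\|T(P)\|_\infty\le\kappa^m\|P\|_\infty$ for an absolute $\kappa$, closing the estimate. This tetrahedral split combined with the one-step polarization is precisely the device that circumvents the $m^{m/2}$ loss you flagged.
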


The Sidon constant $S(m,n)$ is effectively the same as the
unconditional basis constant for the monomials of degree $m$ in
$H^\infty(\D^n)$; the latter is larger than $S(m,n)$ by a factor not
exceeding $2$. This and similar unconditional basis constants were
studied in \cite{DDGM01}, where it was established that $S(m,n)$ is
bounded from above and below by $n^{(m-1)/2}$ times constants
depending only on $m$. The more precise estimate
\begin{equation} \label{old} S(m,n) \le C^m n^{\frac{m-1}{2}},
\end{equation} with $C$ an absolute constant, can be extracted from
\cite{DefFrer06}. By H\"{o}lder's inequality, \eqref{old} also
follows from an interesting inequality of H.~Queff\'{e}lec
\cite{Que}, which says that the $\ell^{2m/(m+1)}$ norm of the
coefficients of a homogeneous polynomial $P$ of degree $m$ is
bounded by $\|P\|_\infty$ times a certain precise constant
depending only on $m$. A more direct deduction of \eqref{old} is
implicit in the work of F.~Bohnenblust and E.~Hille \cite{BH31};
this approach has inspired our proof of \eqref{sharp}.

Note that we also have the following trivial estimate:
\begin{equation} \label{trivial}
S(m,n) \le \sqrt{\binom{n+m-1}{m}},
\end{equation}
which is a consequence of the Cauchy--Schwarz inequality along with the fact
that the number of different monomials of degree $m$ in $n$ variables is
$\binom{n+m-1}{m}$. Comparing \eqref{sharp} and \eqref{trivial}, we see that our
estimate gives a nontrivial result only in the range $\log n >m$. Using the
Salem--Zygmund inequality for random trigonometric polynomials (see \cite[p.
68]{Kahane}), one may check that the estimates \eqref{trivial} and \eqref{sharp}
together give the right value for $S(m,n)$, up to a factor less than $c^m$ with
$c<1$ an absolute constant.

Our application of Theorem~\ref{sidon} to the asymptotic behavior
of the Bohr radius for the polydisc will further illuminate the
significance of \eqref{sharp}. Following \cite{BoasKhav97}, we now
let $K_n$ be the $n$-dimensional Bohr radius, i.e., the largest
positive number $r$ such that all polynomials $\sum_\alpha
c_\alpha z^\alpha$ satisfy
\[
\sup_{z\in r \Dn}\sum_\alpha |c_\alpha z^\alpha| \le \sup_{z\in
\Dn}\Bigl|\sum_\alpha c_\alpha z^\alpha\Bigr|.
\]
The classical Bohr radius $K_1$ was studied and estimated by
H.~Bohr \cite{Bohr14} himself, and it was shown independently by
M.~Riesz, I.~Schur, and F.~Wiener  that $K_1=1/3$. In
\cite{BoasKhav97}, the two inequalities
\begin{equation}\label{BoasKhav}
\frac {1}{3} \sqrt{\frac{1}{n}} \le K_n \le 2 \sqrt{\frac{\log
n}{n}}
\end{equation}
were established for $n>1$. The paper of Boas and Khavinson
aroused new interest in the Bohr radius and has been a source of
inspiration for many subsequent papers. For some time (see for
instance \cite{Boas00}) it was thought that the left-hand side of
\eqref{BoasKhav} could not be improved. However, using
\eqref{old}, A.~Defant and L.~Frerick \cite{DefFrer06} showed that
\[
K_n\ge c \sqrt{\frac{\log n}{n\log\log n}}
\]
holds for some constant $c>0$.

Using Theorem~\ref{sidon}, we will prove the following estimate.
\begin{theorem}\label{bohrradius}
The $n$-dimensional Bohr radius $K_n$ satisfies
\[
K_n\ge \gamma \sqrt {\frac{\log n}{n}}
\]
for an absolute constant $\gamma>0$.
\end{theorem}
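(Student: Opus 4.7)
The plan is to deduce the Bohr radius bound from Theorem~\ref{sidon} via the classical homogeneous-decomposition strategy. Fix a polynomial $P(z) = \sum_\alpha c_\alpha z^\alpha$ on $\D^n$ with $\|P\|_\infty \le 1$, decompose $P = \sum_{m \ge 0} P_m$ into its $m$-homogeneous parts, and set $c_0 = P(0)$. For each $z \in \D^n$ the slice $w \mapsto P(wz) = \sum_m P_m(z) w^m$ is an $H^\infty(\D)$ function bounded by $1$ with constant term $c_0$, so Wiener's coefficient inequality $|a_m| \le 1 - |a_0|^2$ yields $\|P_m\|_\infty \le 1 - |c_0|^2$ for every $m \ge 1$. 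Applying Theorem~\ref{sidon} to each $P_m$ then gives
\[
\sum_\alpha |c_\alpha|\,r^{|\alpha|}
\le |c_0| + (1-|c_0|^2) \sum_{m\ge 1} r^m S(m,n),
\]
and since $t + (1-t^2)A \le 1$ whenever $t \in [0,1]$ and $A \le 1/2$, the theorem reduces to finding an absolute constant $\gamma > 0$ for which $A(r,n) := \sum_{m\ge 1} r^m S(m,n) \le 1/2$ at $r = \gamma\sqrt{(\log n)/n}$.

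Next I would split $A(r,n)$ according to whether $m = 1$, $2 \le m < \sqrt{n}$, or $m \ge \sqrt{n}$. The $m = 1$ term is just $r$, hence $o(1)$. In the middle range Theorem~\ref{sidon} gives
\[
r^m S(m,n) \le \frac{(C\gamma\sqrt{\log n})^m}{\sqrt{n\,(m-1)!}},
\]
and a single application of Cauchy--Schwarz together with $\sum_k x^{2k}/k! = e^{x^2}$ bounds the sum over this range by a constant multiple of $\gamma\sqrt{\log n}\,n^{C^2\gamma^2/2 - 1/4}$, which tends to $0$ as $n\to\infty$ provided $\gamma < 1/(C\sqrt{2})$. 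In the tail $m \ge \sqrt{n}$ I would use the trivial bound \eqref{trivial} in the form $\binom{n+m-1}{m} \le (e(n+m)/m)^m$ to write $r^m S(m,n) \le (r^2 e(n+m)/m)^{m/2}$; since for $m \ge \sqrt{n}$ the base satisfies $r^2 e(n+m)/m \le 2e\gamma^2(\log n)/\sqrt{n}$, which is $\le 1/2$ once $n \ge n_0$, the tail reduces to a geometric series summing to $o(1)$.

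The main obstacle is the balancing act in the middle range, where the factorial in the denominator must absorb the $(\log n)^{m/2}$ growth for $m \gtrsim \log n$, and $\gamma$ must be small enough that the surviving $n^{C^2\gamma^2/2}$ factor is dominated by the $n^{-1/2}$ prefactor from Theorem~\ref{sidon}. Once $\gamma$ is chosen in this way, the three pieces combine to give $A(r,n) \le 1/2$ for all $n \ge n_0$. For the finitely many $n < n_0$, the inequality $K_n \ge 1/(3\sqrt{n})$ in \eqref{BoasKhav} already implies $K_n \ge \gamma\sqrt{(\log n)/n}$ after shrinking $\gamma$ further if necessary, completing the proof.
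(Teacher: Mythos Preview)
Your proof is correct and follows essentially the same approach as the paper: expand into homogeneous parts, apply Wiener's lemma, control small degrees with Theorem~\ref{sidon} and large degrees with the trivial bound \eqref{trivial}, then sum. The only differences are technical---you split at $m=\sqrt{n}$ and sum the middle range via Cauchy--Schwarz and the exponential series, whereas the paper splits at $m=\log n$ and uses the elementary inequality $(\log n)^m/n\le m!$ to reduce the middle range to a geometric series; both choices work and yield the same conclusion.
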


Combining this result with the right inequality in
\eqref{BoasKhav}, we conclude that
\[ K_n=b(n) \sqrt{\frac{\log n}{n}} \]
with $0<\gamma\le b(n) \le 2$. It is possible to extract from our
methods a numerical value for $\gamma$ larger than $0.2$, cf. the
concluding remark of Section~5.

\section{Preliminaries on multilinear forms}
The transformation of a homogeneous polynomial to a corresponding
multilinear form will play a crucial role in the proof of
Theorem~\ref{sidon}. We denote by $B$ an $m$-multilinear form in
$\C^n$, i.e., given $m$ points $z^{(1)},\ldots,z^{(m)}$ in $\C^n$,
we set
\[
B(z^{(1)},\ldots,z^{(m)})=\sum_\beta b_\beta
z_{\beta_1}^{(1)}\cdots z_{\beta_m}^{(m)}.
\]
We may express the coefficients as
$b_\beta=B(e^{\beta_1},\ldots,e^{\beta_m})$, where
$\{e^i\}_{i=1,\ldots,n}$ is the canonical base of $\C^n$. The form
$B$ is symmetric if for every permutation $\sigma$ of the set
$\{1,2,\ldots,m\}$,
$B(z^{(1)},\ldots,z^{(m)})=B(z^{(\sigma(1))},\ldots,z^{(\sigma(m))})$.
If we restrict a symmetric multilinear form to the diagonal
$P(z)=B(z,\ldots,z)$, then we obtain a homogeneous polynomial. The
converse is also true: Given a homogeneous polynomial $P:\C^n\to
\C$ of degree $m$, by polarization, we may define the symmetric
m-multilinear form $B: \C^{n}\times\cdots\times\C^n\to \C$ by
setting
\begin{equation}\label{polarization}
B(z^{(1)},\ldots,z^{(m)})=\frac {1}{2^m
m!}\sum_{\begin{subarray}{c}\varepsilon_i=\pm 1 \\ 1\le i\le
m\end{subarray}}\varepsilon_1\varepsilon_2\cdots\varepsilon_m
P\Bigl(\sum_{i=1}^m \varepsilon_i z^{(i)}\Bigr)
\end{equation}
so that $B(z,\ldots,z)=P(z)$. In what follows, $B$ will denote the
symmetric $m$-multilinear form obtained in this way from $P$.

We will consider the analogous norms for symmetric multilinear
forms as those introduced above. This means that we set
\[\| B \|_\infty=\sup_{\D^n\times\cdots\times \D^n} |B(z^{(1)},\ldots,z^{(m)})|
 \qquad \text{and} \qquad
 \lone B \rone=\sum_{|\beta|=m}
|b_\beta|.\]

It will be important for us to be able to relate the norms of $P$
and $B$. It is plain that $\|P\|_\infty \le \|B\|_\infty$. On the
other hand, it was proved by L.~Harris \cite{Harris75} that we
have, for non-negative integers $m_1,\cdots,m_k$ with
$m_1+\cdots+m_k=m$,
\begin{equation}\label{harris}
|B(\underbrace{z^{(1)},\ldots,z^{(1)}}_{m_1},\ldots,\underbrace{z^{(k)},\ldots,
z^{(k)}}_{m_k})|\le \frac{m_1!\cdots m_k!}{m_1^{m_1}\cdots
m_k^{m_k}}\frac{m^m}{m!}\
\|P\|_\infty;
\end{equation}
this result can be obtained from the polarization formula
\eqref{polarization}.

To compare the $\lone \cdot\rone$ norms, observe that the
coefficients $b_\beta$ of $B$  can be computed from the
corresponding coefficient of $P$: $b_\beta=c_\alpha/h(\beta)$,
where $h(\beta)$ is the number of different words that can be
formed with the letters in $\beta$. The corresponding $\alpha_j$
is the number of times any of the indices $\beta_i$ equals $j$. It
is therefore clear that
\[ \sum_{\alpha} |c_\alpha| = \sum_\beta |b_\beta|, \]
or, in other words, $\lone P \rone=\lone B\rone $.

\section{The tetrahedral part of a homogeneous polynomial}

A polynomial $Q(z)=\sum_\alpha c_\alpha z^\alpha$ is said to be
tetrahedral if $c_\alpha$ is nonzero only if $\max_j \alpha_j\le
1$; thus no term in $Q$ contains a factor of degree $2$ or higher
in any of the variables $z_1,\ldots,z_n$. Now set
$E=\{(\alpha_1,\ldots,\alpha_n): |\alpha|=m,\ \alpha_i\le 1,\
\forall i=1,\ldots,n\}.$ Then $T(P)=\sum_{\alpha\in E} c_\alpha
z^\alpha$ is the tetrahedral part of $P$ and $R(P)=P-T(P)$ is the
remainder corresponding to monomials containing a higher order
power in at least one of the variables $z_1,\ldots,z_n$.

In the next lemma, $p_1, p_2,\ldots$ are the prime numbers, listed
by increasing order, and $\sinc x=(\sin x)/x$.
\begin{lemma}\label{lemarep}
We have $\|T(P)\|_\infty \le \kappa^m \|P\|_\infty$ for every
homogeneous polynomial $P$ of degree $m$, where the constant
$\kappa$ can be taken as \[ \kappa=\left(\prod_{k=1}^\infty\
\sinc{\frac{\pi}{p_k}}\right)^{-1}=2.209\ldots .
\]
\end{lemma}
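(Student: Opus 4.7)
My plan is to construct a single probability measure $\mu$ on the unit circle $\mathbb{T}$ with the property that, when the rotation $z_k\mapsto e^{i\theta_k}z_k$ is averaged against $\mu$ independently in each of the $n$ variables, every monomial in $R(P)$ gets annihilated while each tetrahedral monomial is preserved up to an explicit multiplicative constant. The $\sinc(\pi/p)$ factors in the formula for $\kappa$ point directly to uniform averaging over intervals of length $2\pi/p$, and primes enter because $\sinc(j\pi/p)=0$ exactly when $p\mid j$.

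Concretely, I would let $\mu_p$ denote, for each prime $p$, the probability measure with constant density $p/(2\pi)$ on $[-\pi/p,\pi/p]\subset\mathbb{T}$, so that $\int_{\mathbb{T}} e^{ij\theta}\,d\mu_p(\theta)=\sinc(j\pi/p)$. Fixing $N$ large enough that $p_N\ge m$, I would set $\mu=\mu_{p_1}*\cdots*\mu_{p_N}$. Then $\mu$ is a probability measure whose Fourier moments $\hat\mu(j)=\prod_{k=1}^N\sinc(j\pi/p_k)$ satisfy $\hat\mu(0)=1$, $\hat\mu(j)=0$ for every integer $2\le j\le m$ (any such $j$ has a prime divisor $\le m\le p_N$), and $\hat\mu(1)\ge\prod_{k=1}^\infty\sinc(\pi/p_k)=1/\kappa$.

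Next I would define, for each $k=1,\ldots,n$, an operator on $H^\infty(\Dn)$ by
\[
L_k f(z)=\int_{\mathbb{T}} f(z_1,\ldots,z_{k-1},e^{i\theta}z_k,z_{k+1},\ldots,z_n)\,d\mu(\theta).
\]
Since $\mu$ is a probability measure and each rotation preserves $\Dn$, every $L_k$ is a contraction: $\|L_k f\|_\infty\le\|f\|_\infty$. A direct Fourier computation shows that $L_k$ acts diagonally on monomials by $z^\alpha\mapsto\hat\mu(\alpha_k)z^\alpha$, so the composition $L_1\circ\cdots\circ L_n$ sends $c_\alpha z^\alpha$ to $c_\alpha\bigl(\prod_{k=1}^n\hat\mu(\alpha_k)\bigr)z^\alpha$. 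For $\alpha\in E$ this product equals $\hat\mu(1)^m$, because exactly $m$ entries of $\alpha$ are $1$ and the rest are $0$; for every other multi-index with $|\alpha|=m$, some $\alpha_k$ lies in $\{2,\ldots,m\}$ and the product vanishes. This identifies $L_1\circ\cdots\circ L_n(P)$ with $\hat\mu(1)^m\,T(P)$, and the contraction property of each $L_k$ yields
\[
\|T(P)\|_\infty\le\hat\mu(1)^{-m}\|P\|_\infty\le\kappa^m\|P\|_\infty.
\]

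The only step requiring genuine insight is the choice of $\mu$: one must see that a single convolution of uniform interval-measures indexed by primes simultaneously kills every frequency in $\{2,\ldots,m\}$, because each such integer has at least one prime divisor in that range. After that, the bound $\|L_k\|\le 1$ is immediate, the diagonal action on monomials is a standard Fourier computation, and the exponent $m$ in $\kappa^m$ emerges naturally from the $m$ nonzero entries in the multi-index of a tetrahedral monomial.
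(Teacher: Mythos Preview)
Your proof is correct and is essentially the same construction as the paper's: both average the substitution $z_j\mapsto w_j z_j$ with a weight built from primes $\le m$ so that $\int w=1$ (or $\hat\mu(1)\neq 0$) while $\int w^k=0$ for $2\le k\le m$. The only cosmetic difference is normalization---the paper absorbs $\kappa$ into the modulus of the multiplier $r_m$ (so the integral yields $T(P)$ exactly and the bound comes from $|r_m|\le\kappa$ plus homogeneity), whereas you keep unimodular rotations (so the averaging is a contraction) and extract $\kappa$ as $\hat\mu(1)^{-1}$.
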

\begin{proof}
We will need the counting function for the prime numbers, which
will be denoted by $\varpi(x)$, in order not to confuse it with
the number $\pi$. We begin by constructing some auxiliary
functions. Set $Q=[0,1]^{\varpi(m)}$, let
$t=(t_1,\ldots,t_{\varpi(m)})$ denote a point in $Q$, and let
$d\mu$ be Lebesgue measure on $Q$. Define
\[
 r_m(t)=c_m \exp \left(2\pi i\Bigl(\frac {t_1}2 + \frac {t_2}3+\cdots+ \frac
{t_{\varpi(m)}}{p_{\varpi(m)}}\Bigr)\right),
\]
where
\[
 c_m=\prod_{k=1}^{\varpi(m)} \Bigl(\frac {p_k}{2\pi i}
\left(e^{\frac {2\pi i}{p_k}} -1\right)\Bigr)^{-1}.
\]
The functions $r_m:Q\to \C$ have the following properties:
\begin{enumerate}
 \item[(i)] $\int_Q r_m(t)\,d\mu(t)=1$,
 \item[(ii)] $\int_Q r_m^k(t)\, d\mu(t)=0$\ \ for all
 $k=2,\ldots,m$,
 \item[(iii)] $|r_m(t)|\le \kappa$\ \ for all $t$ in $Q$ and all $m>1$.
\end{enumerate}
It is immediate that (i) and (ii)
are satisfied. We note that (iii) also holds, because
$|r_m(t)|\equiv |c_m|$ and
\[
|c_m|^{-2}= \prod_{k=1}^{\varpi(m)} \frac{p_k^2}{(2\pi)^2}
\Bigl|e^{\frac{2\pi i}{p_k}}-1\Bigr|^2
 = \prod_{k=1}^{\varpi(m)} \sinc^2\frac{\pi}{p_k}.
\]
By properties (i) and (ii),
\[
T(P)(z)=\int_{Q^n} P(z_1r(t^1),\ldots, z_n r(t^n))\,
d\mu(t^1)\cdots d\mu(t^n),
\]
and so, by property (iii), $|P(z_1 r(t^1),\ldots, z_n
r(t^n))| \le \kappa^m \|P\|_\infty$ for every $z$ in $\D^n$.
\end{proof}
We can similarly define a decomposition of symmetric
$m$-multilinear forms. Let $F$ be the set of multiindices
$F=\{(\beta_1,\ldots,\beta_m):\ 1\le \beta_i \le n \text{ and all
indices }\beta_k \text{ are pairwise disjoint}\}$. Then we may
decompose $B=T(B)+R(B)$, where
\[
T(B)(z^{(1)},\ldots,z^{(m)})=\sum_{\beta\in F} b_\beta
z_{\beta_1}^{(1)}\cdots z_{\beta_m}^{(m)},
\]
Clearly, if $P$ is a homogeneous polynomial and $B$ its
corresponding symmetric multilinear form, then $T(P)$ has $T(B)$
as the corresponding multilinear form.
\section{Proof of Theorem~\ref{sidon}}

Since
\[ \lone P \rone = \lone R(P) \rone + \lone T(P) \rone, \]
it will suffice to obtain appropriate estimates for each of the
norms $\lone R(P) \rone$ and $\lone T(P) \rone$. The two lemmas
below together give the required bound for $\lone P \rone$.

We begin by estimating $\lone R(P) \rone $ in the range $n>m^2$.
\begin{lemma}\label{SC-S}
For a homogeneous polynomial $P$ of degree $m$ and $n>m^2>1$, we
have
\begin{equation}\label{coefrep}
\lone R(P)\rone \le \sqrt{2me \frac{n^{m-1}}{(m-1)!}}\
\|P\|_\infty.
\end{equation}
\end{lemma}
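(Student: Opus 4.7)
The strategy is to apply Cauchy--Schwarz to the coefficient sum defining $\lone R(P)\rone$, using Parseval on the distinguished boundary $\Tn$ to control $\sum |c_\alpha|^2$.

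Since the monomials $\{z^\alpha\}$ are orthonormal in $L^2(\Tn)$ under normalized Haar measure, and $\|P\|_{L^2(\Tn)} \le \|P\|_\infty$ by the maximum modulus principle, Parseval immediately gives $\sum_\alpha |c_\alpha|^2 \le \|P\|_\infty^2$. Writing $R := \{\alpha : |\alpha| = m,\ \max_j \alpha_j \ge 2\}$ for the index set of the remainder and applying Cauchy--Schwarz to $\lone R(P)\rone = \sum_{\alpha \in R}|c_\alpha|$ yields
\[
\lone R(P)\rone \le |R|^{1/2}\Bigl(\sum_{\alpha \in R}|c_\alpha|^2\Bigr)^{1/2} \le |R|^{1/2}\,\|P\|_\infty.
\]

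The remaining task is the combinatorial estimate $|R| = \binom{n+m-1}{m} - \binom{n}{m} \le 2me\cdot n^{m-1}/(m-1)!$ under the hypothesis $n > m^2$. Using the product representation
\[
\frac{\binom{n+m-1}{m}}{\binom{n}{m}} = \prod_{k=1}^{m-1}\Bigl(1 + \frac{2k}{n-k}\Bigr),
\]
the exponent $L := \sum_{k=1}^{m-1} 2k/(n-k)$ is bounded by $m(m-1)/(n-m+1)$, which is at most $1$ when $n > m^2$. The elementary inequality $e^L-1 \le eL$ for $L \le 1$ then gives $|R| \le eL \cdot \binom{n}{m} = e(m-1)\binom{n}{m-1}$ after the identity $\binom{n}{m}/(n-m+1) = \binom{n}{m-1}/m$, and bounding $\binom{n}{m-1} \le n^{m-1}/(m-1)!$ delivers the claim with room to spare.

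The main obstacle is this final combinatorial estimate: extracting an additive bound of order $n^{m-1}/(m-1)!$ from the multiplicative gap between $\binom{n+m-1}{m}$ and $\binom{n}{m}$ is where the hypothesis $n > m^2$ does its essential work, turning the $L$-dependent correction into a factor of order $1/n$ rather than order $1$. Once this counting step is carried out, plugging the bound into the Cauchy--Schwarz estimate finishes the proof.
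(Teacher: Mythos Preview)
Your proof is correct and follows essentially the same route as the paper: Cauchy--Schwarz together with Parseval on $\Tn$ reduces the problem to bounding the count $|R|=\binom{n+m-1}{m}-\binom{n}{m}$, and the hypothesis $n>m^2$ is used in the same way to keep the relevant exponent below~$1$. The only difference is cosmetic, in how that combinatorial bound is carried out---the paper sandwiches the two binomials by $\frac{n^m}{m!}(1\pm m/n)^m$ and then uses $e^{m^2/n}-e^{-m^2/n}\le 2e\,m^2/n$, whereas you work with the exact product for the ratio $\binom{n+m-1}{m}/\binom{n}{m}$; your version even yields the slightly sharper factor $e(m-1)$ in place of $2em$.
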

\begin{proof}
We begin by observing that the number of monomials $z^\alpha$ in $n$
variables of degree $m$ with $\max_j \alpha_j >1$ is
$\binom{n+m+1}{m}-\binom{n}{m}$. Thus the Cauchy--Schwarz inequality
gives \[ \lone R(P)\rone \le \sqrt{\binom{n+m+1}{m}-\binom{n}{m}}\
\| P\|_\infty. \] The result follows from this because
\[
\begin{split} \binom{n+m+1}{m}-\binom{n}{m}& \le
\frac{n^m}{m!}
\left[\left(1+\frac{m}{n}\right)^m-\left(1-\frac{m}{n}\right)^m\right]
\\ & \le \frac{n^m}{m!}\left[ e^{m^2/n} - e^{-m^2/n}\right] \le 2em
\frac{n^{m-1}}{(m-1)!}.
\end{split}
\]
\end{proof}

We turn next to the most challenging case, which is that of the
tetrahedral part $T(P)$. Now the Cauchy--Schwarz inequality does
not work because there are too many coefficients. We will transfer
the problem to $T(B)$ and use instead a special form of the
multilinear Khinchine inequality, which can be traced back to
\cite{Bonami70}. The precise formulation of the  result to be used
is in \cite[Theorem 3.2.2]{PenyaGine99}. In the theorem below,
$\{\epsilon_i\}_{i=1}^\infty$ denotes a Rademacher sequence of
random variables, i.e., the $\epsilon_i$ are i.i.d and $\mathbb
P(\epsilon_i=1)=\mathbb P(\epsilon_i=-1)=1/2$.
\begin{theorem}[Hypercontractivity] \label{hyper}
Let $X$ be a homogeneous chaos of order $m$: \[ X=\sum_{1\le
i_1<\cdots<i_m\le
n}x_{i_1,\ldots,i_m}\epsilon_{i_1}\cdots\epsilon_{i_m},
\]
with $x_{i_1,\ldots,i_m}\in \mathbb C$. Then
\[
\bigl(\mathbb E(|X|^2)\bigr)^{1/2} \le e^{m} \mathbb E(|X|).
\]
\end{theorem}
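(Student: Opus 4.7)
The plan is to derive this from the Bonami--Beckner hypercontractive inequality for the noise operator on the discrete cube, together with H\"older interpolation (below, $\|\cdot\|_q$ denotes the $L^q$ norm on the Rademacher probability space). Recall that the operator $T_\rho$, defined on the Walsh basis by $T_\rho(\epsilon_{i_1}\cdots\epsilon_{i_k})=\rho^k\,\epsilon_{i_1}\cdots\epsilon_{i_k}$, satisfies $\|T_\rho f\|_q\le \|f\|_p$ whenever $1\le p\le q<\infty$ and $\rho^2\le(p-1)/(q-1)$. Since $X$ is a pure $m$-homogeneous chaos, it is an eigenfunction of $T_\rho$ with eigenvalue $\rho^m$; taking $p=2$ then produces the reverse-H\"older bound
\[
\|X\|_q \le (q-1)^{m/2}\,\|X\|_2 \qquad (q\ge 2),
\]
which is the key input.

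The second step is ordinary H\"older interpolation between $L^1$ and $L^q$: for $q>2$, setting $\theta=(q-2)/(2(q-1))$ so that $\tfrac12=\theta+(1-\theta)/q$, one has $\|X\|_2\le \|X\|_1^\theta\|X\|_q^{1-\theta}$. Substituting the reverse-H\"older estimate and solving for $\|X\|_2$ gives
\[
\|X\|_2 \le (q-1)^{mq/(2(q-2))}\,\|X\|_1,
\]
valid for every $q>2$.

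The third step is to optimize in $q$. Writing $q=2+s$, the constant becomes $(1+s)^{m(2+s)/(2s)}$; a Taylor expansion of its logarithm shows this quantity tends to $e^m$ as $s\to 0^+$ (while strictly exceeding $e^m$ for every $s>0$). Since the bound above holds for every $s>0$, taking the infimum yields $\|X\|_2\le e^m\|X\|_1$, which is exactly the stated inequality $(\mathbb E|X|^2)^{1/2}\le e^m\,\mathbb E|X|$.

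The real obstacle in this route is the Bonami--Beckner inequality itself. The standard proof proceeds by induction on the number $n$ of Rademacher variables: conditioning on $\epsilon_n$ and invoking Minkowski's integral inequality reduces matters to the two-point inequality $(\mathbb E|a+\rho b\epsilon|^q)^{1/q}\le (\mathbb E|a+b\epsilon|^p)^{1/p}$ for a single Rademacher $\epsilon$ and complex scalars $a,b$. This one-variable statement can be verified by a homogeneity reduction followed by a direct calculus check; it is equivalent to the exponentiated form of a logarithmic Sobolev inequality on the two-point space.
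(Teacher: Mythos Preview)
Your argument is correct. The paper itself does not prove this statement: it quotes it as \cite[Theorem 3.2.2]{PenyaGine99} and notes that it ``can be traced back to'' Bonami \cite{Bonami70}. Your route---Bonami--Beckner hypercontractivity giving $\|X\|_q\le (q-1)^{m/2}\|X\|_2$, H\"older interpolation between $L^1$ and $L^q$, and optimization in $q$---is exactly the standard derivation underlying that reference, and the calculus check that $(1+s)^{(2+s)/(2s)}$ decreases to $e$ as $s\to 0^+$ (equivalently, $(2+s)\log(1+s)\ge 2s$, which follows from two differentiations) is clean and gives precisely the constant $e^m$.

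One small remark on your last paragraph: the two-point Bonami inequality is usually proved for real scalars. The complex case of $\|T_\rho f\|_q\le\|f\|_p$ then follows not by redoing the two-point lemma with complex $a,b$, but from the positivity of $T_\rho$ as a Markov averaging operator, which yields $|T_\rho f|\le T_\rho|f|$ pointwise and hence $\|T_\rho f\|_q\le \|T_\rho|f|\|_q\le \||f|\|_p=\|f\|_p$. You may want to phrase that step accordingly.
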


With this theorem we can prove the following.
\begin{lemma}\label{TSidon} For every homogeneous polynomial
$P$ of degree $m$ with $m<n$, we have
\[
\lone T(B)\rone\le (e\kappa)^{m} \binom{n-1}{m-1}^{1/2}
\|P\|_\infty,
\]
where $\kappa$ is the constant from Lemma~\ref{lemarep}.
\end{lemma}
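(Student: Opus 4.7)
The plan is to reduce the problem to the tetrahedral polynomial $T(P)$ via the identity $\lone T(B) \rone = \lone T(P) \rone = \sum_{|S|=m} |c_S|$ established in Section~2, and then perform a Cauchy--Schwarz slice by slice in each coordinate direction, so that only $\binom{n-1}{m-1}$ monomials enter each slice rather than the full $\binom{n}{m}$.

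For each fixed $j \in \{1,\ldots,n\}$, the partial derivative $\partial_j T(P)$ is a tetrahedral polynomial of degree $m-1$ whose coefficients are exactly the $c_S$ with $S \ni j$. I would apply Theorem~\ref{hyper} to $\partial_j T(P)(\epsilon)$ as a Walsh chaos of order $m-1$; combined with Parseval this gives
\[
  \Bigl(\sum_{S \ni j}|c_S|^2\Bigr)^{1/2} \le e^{m-1}\,\mathbb{E}|\partial_j T(P)(\epsilon)|.
\]
Cauchy--Schwarz on the $\binom{n-1}{m-1}$ subsets containing $j$ then gives $\sum_{S \ni j}|c_S| \le \binom{n-1}{m-1}^{1/2}\bigl(\sum_{S\ni j}|c_S|^2\bigr)^{1/2}$, and summing over $j$ together with the double-counting identity $\sum_j \sum_{S \ni j}|c_S| = m\sum_S |c_S|$ reduces the lemma to the uniform estimate $\sum_j\mathbb{E}|\partial_j T(P)(\epsilon)| \le m\,e\,\kappa^m\|P\|_\infty$.

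For this last estimate the multilinear form re-enters. A direct calculation using the symmetry of $T(B)$ shows that $T(B)(w,z,\ldots,z) = (1/m)\,D_w T(P)(z)$; optimizing unit-modulus phases $w_j$ to align the summands produces the identity
\[
  \sum_j|\partial_j T(P)(z)| = m\,\sup_{w\in\Tn}|T(B)(w,z,\ldots,z)|.
\]
Harris's inequality~\eqref{harris} applied to $T(B)$ with $m_1 = 1$, $m_2 = m-1$ gives the pointwise bound $|T(B)(w,z,\ldots,z)|\le (m/(m-1))^{m-1}\|T(P)\|_\infty\le e\|T(P)\|_\infty$, and Lemma~\ref{lemarep} finishes the job: $\sum_j|\partial_j T(P)(z)|\le m e \kappa^m\|P\|_\infty$ on the closed polydisc, hence also after taking expectation over the Rademacher sequence.

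The main obstacle is producing the factor $\binom{n-1}{m-1}^{1/2}$ in place of the easier $\binom{n}{m}^{1/2}$ that a direct hypercontractivity--Cauchy--Schwarz on $T(P)$ yields. Slicing by coordinate achieves this, but threatens to lose a factor of $n$ at the summation stage if one bounds each $\|\partial_j T(P)\|_\infty$ individually via Bernstein's inequality. The crucial trick is to estimate the sum $\sum_j|\partial_j T(P)(z)|$ \emph{simultaneously} through $T(B)$ and Harris's mixed-argument inequality, which contributes only the constant $e$ and preserves the clean $(e\kappa)^m$ dependence in the final bound.
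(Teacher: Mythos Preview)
Your proposal is correct and follows essentially the same route as the paper's proof. The only difference is cosmetic: you phrase the ``slice at coordinate $j$'' as the partial derivative $\partial_j T(P)$, whereas the paper works directly with the multilinear coefficients $B(e^{i_1},e^{i_2},\ldots,e^{i_m})$ for fixed $i_1$; but the chain Cauchy--Schwarz over $\binom{n-1}{m-1}$ indices $\to$ hypercontractivity for an $(m-1)$-chaos $\to$ the unimodular-phase identity $\sum_j|\partial_j T(P)(z)| = m\sup_{w}|T(B)(w,z,\ldots,z)|$ (the paper's $\lambda_{i_1}$ trick) $\to$ Harris $\to$ Lemma~\ref{lemarep} is identical in both arguments.
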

\begin{proof} Put
\[
\begin{split}
F&=\{(i_1,\ldots, i_m), 1\le i_1,\ldots,i_m\le n \hbox { pairwise distinct}\},\\
F_{i_1}&=\{(i_2,\ldots, i_m) :\ (i_1,\ldots,i_m)\in F\},\\
\widetilde F_{i_1}&=\{(i_2,\ldots, i_m)\in F_{i_1},\ i_2<\cdots <i_m\}.
\end{split}
\]
We may write
\[
\begin{split}
\lone T(B)\rone
&=\sum_{(i_1,\ldots,i_m)\in F}|B(e^{i_1},\ldots,e^{i_m})|\\
& =\sum_{i_1=1}^n \sum_{(i_2,\ldots,i_m)\in F_{i_1}}
|B(e^{i_1},e^{i_2},\ldots,e^{i_m})|\\
&=\sum_{i_1=1}^n \sum_{(i_2,\ldots,i_m)\in \widetilde F_{i_1}}(m-1)!
|B(e^{i_1},e^{i_2},\ldots,e^{i_m})|\\
&\le \binom {n-1}{m-1}^{1/2} \sum_{i_1=1}^n  \
\Bigl(\!\!\!\sum_{(i_2,\ldots,i_m)\in \widetilde
F_{i_1}}\hspace{-5mm}
((m-1)!|B(e^{i_1},e^{i_2},\ldots,e^{i_m})|)^2\Bigr)^{1/2}.
\end{split}
\]
By Theorem~\ref{hyper},
\[
\begin{split}
&\Bigl(\sum_{(i_2,\ldots,i_m)\in \widetilde F_{i_1}}((m-1)!|
B(e^{i_1},e^{i_2},\ldots,e^{i_m})|)^2\Bigr)^{1/2}\\
&\qquad \le e^{m-1}\,\mathbb E\, \Bigl(\bigl|
\sum_{(i_2,\ldots,i_m)\in \widetilde
F_{i_1}}(m-1)!B(e^{i_1},e^{i_2},\ldots,e^{i_m})
\epsilon_{i_2}\cdots \epsilon_{i_m}\bigr|\Bigr).
\end{split}
\]
Summing over $i_1$, we get
\begin{equation} \label{First}
\lone T(B)\rone  \le e^{m-1}\binom {n-1}{m-1}^{1/2}\!\! \sup_{z\in
\D^n}\sum_{i_1=1}^n\ \Bigl|\!\! \sum_{(i_2,\ldots,i_m)\in
F_{i_1}}\hspace{-4mm}
B(e^{i_1},e^{i_2},\ldots,e^{i_m})z_{i_2}\cdots z_{i_m}\Bigr|.
\end{equation}
We introduce the notation \[ \lambda_{i_1}(z)=\frac{\bigl|
\sum_{(i_2,\ldots,i_m)\in
F_{i_1}}B(e^{i_1},\ldots,e^{i_m})z_{i_2}\cdots z_{i_m}\bigr|}
{\sum_{(i_2,\ldots,i_m)\in
F_{i_1}}B(e^{i_1},\ldots,e^{i_m})z_{i_2}\cdots z_{i_m}}\] and
obtain from \eqref{First}
\[
\begin{split}
\lone T(B) \rone &\le e^{m-1}\binom {n-1}{m-1}^{1/2}\!\! \sup_{z\in
\D^n}\!\! \sum_{(i_1,\ldots,i_m)\in F}\hspace{-5mm}
B(e^{i_1},e^{i_2},\ldots,e^{i_m})\lambda_{i_1}(z)z_{i_2}\cdots
z_{i_m}\\
& \le e^{m-1}\binom {n-1}{m-1}^{1/2} \sup_{(z^{(1)},z^{(2)})\in
\D^n\times \D^n}|T(B)(z^{(1)},\underbrace{z^{(2)},\ldots,z^{(2)}}_{m-1})|\\
&\le e^{m} \binom
{n-1}{m-1}^{1/2}\|T(P)\|_\infty \\
& \le e^{m} \kappa^m \binom {n-1}{m-1}^{1/2}\|P\|_\infty,
\end{split}
\]
where in the last two steps we used Harris's inequality
\eqref{harris} and Lemma~\ref{lemarep}.
\end{proof}

\section{Proof of Theorem~\ref{bohrradius}}

For the proof of Theorem~\ref{bohrradius}, we need the following
lemma of F. Wiener (see \cite{BoasKhav97}).
\begin{lemma}\label{FWiener}
Let $P$ be a polynomial in $n$ variables and $P=\sum_{m\ge 0} P_m$
its expansion in homogeneous polynomials. If $\|P\|_\infty \le 1$,
then $\|P_m\|_\infty \le 1-|P_0|^2$ for every $m>0$.
\end{lemma}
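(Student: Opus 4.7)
My plan is to reduce the multivariate inequality to a one-variable Schwarz--Pick estimate by slicing. Fix $z\in\overline{\D^n}$ and consider the polynomial
\[\phi(\lambda)=P(\lambda z)=\sum_{j\ge 0}P_j(z)\,\lambda^j.\]
For $|\lambda|\le 1$ we have $\lambda z\in\overline{\D^n}$, so $|\phi(\lambda)|\le\|P\|_\infty\le 1$; moreover $\phi(0)=P_0$ and $P_m(z)$ appears as the Taylor coefficient of $\lambda^m$. The problem therefore reduces to the one-variable assertion that any $\phi(\lambda)=\sum a_j\lambda^j$ holomorphic on $\overline{\D}$ with $|\phi|\le 1$ satisfies $|a_m|\le 1-|a_0|^2$ for every $m\ge 1$; once this is established, taking the supremum over $z$ yields the lemma.

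The key step will be to collapse $\phi$ to a holomorphic self-map of $\overline{\D}$ whose derivative at the origin equals $a_m$. Set $\omega=e^{2\pi i/m}$ and
\[\phi_m(\lambda)=\frac{1}{m}\sum_{k=0}^{m-1}\phi(\omega^k\lambda).\]
Root-of-unity orthogonality kills every Taylor coefficient of $\phi$ whose index is not a multiple of $m$, so $\phi_m(\lambda)=\sum_{j\ge 0}a_{jm}\lambda^{jm}$, and there is a unique holomorphic $\Phi$ with $\phi_m(\lambda)=\Phi(\lambda^m)$. Because every $w\in\overline{\D}$ equals $\lambda^m$ for some $\lambda\in\overline{\D}$, and the triangle inequality gives $|\phi_m(\lambda)|\le 1$, the function $\Phi$ maps $\overline{\D}$ into $\overline{\D}$.

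Finally, $\Phi(0)=a_0$ and $\Phi'(0)=a_m$, so the Schwarz--Pick inequality---or, equivalently, Schwarz's lemma applied to the Blaschke composition $w\mapsto(\Phi(w)-a_0)/(1-\overline{a_0}\,\Phi(w))$, which vanishes at $0$ and has modulus at most $1$ on $\D$---immediately delivers $|a_m|\le 1-|a_0|^2$. The main obstacle is finding this reduction: the naive Cauchy or Parseval bounds yield only $|a_m|\le 1$ or $|a_m|\le\sqrt{1-|a_0|^2}$, and the sharper exponent $2$ on $|a_0|$ appears only once we recognize that the roots-of-unity average is itself a holomorphic function of $\lambda^m$, thereby turning the coefficient extraction into a single Schwarz--Pick estimate at the origin.
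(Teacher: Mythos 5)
The paper does not actually supply a proof of this lemma; it simply cites Boas and Khavinson \cite{BoasKhav97}, where the argument appears. Your proof is correct and is essentially the same argument as in that reference: slice $P$ along complex lines through the origin to reduce to one variable, average over $m$-th roots of unity to isolate the coefficients in arithmetic progression $0, m, 2m, \ldots$, recognize the result as $\Phi(\lambda^m)$ for a self-map $\Phi$ of the disc, and apply Schwarz--Pick at the origin. The one small thing worth making explicit is the degenerate case $|a_0|=1$: then $\Phi$ is constant by the maximum principle, so $a_m=0$ and the inequality is trivial; your Blaschke composition is only defined when $|a_0|<1$, which is the case where Schwarz--Pick has content. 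Otherwise the reduction and the coefficient extraction are exactly right.
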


 \begin{proof}[Proof of Theorem~\ref{bohrradius}] We assume that $\sup_{\D^n}
\bigl|\sum c_\alpha z^\alpha \bigr|\le 1$. Observe that for all $z$ in
$r\Dn$,
\[
\sum |c_\alpha z^\alpha|\le |c_0|+\sum_{m>1} r^m\sum_{|\alpha|=m}|c_\alpha|.
\]
When $m>\log n$, we use \eqref{trivial} and Lemma~\ref{FWiener},
and obtain the estimate
\begin{equation}\label{bigm}
 \sum_{m>\log n} r^m\sum_{|\alpha|=m}|c_\alpha| \le  \sum_{m>\log n}
r^m \sqrt{\binom{n+m-1}{m}} (1-|c_0|^2),
\end{equation}
whence
\begin{equation}\label{mbig}
 \sum_{m>\log n} r^m\sum_{|\alpha|=m}|c_\alpha| \le  \sum_{m>\log n}
r^m (2e)^m \max(1,n/m)^{m/2} (1-|c_0|^2).
\end{equation}
If we take into account the estimate
\[
\frac{(\log n)^{m}}{n}\le m!
\]
(obtained by a calculus argument), then Theorem~\ref{sidon} and
Lemma~\ref{FWiener} give
\begin{equation}\label{msmall}
 \sum_{m<\log n} r^m\sum_{|\alpha|=m}|c_\alpha| \le  \sum_{m<\log n}
r^m \sqrt{m}\ C^m\Bigl(\frac{n}{\log n}\Bigr)^{m/2} (1-|c_0|^2).
\end{equation}
If we now choose $r\le \varepsilon \sqrt{\frac{\log n}{n}}$ with
$\varepsilon$ small enough and combine \eqref{mbig} and
\eqref{msmall}, we obtain
\[
\sum |c_\alpha z^\alpha|\le|c_0| + (1-|c_0|^2)/2\le 1
\]
whenever $|c_0|\le 1$. Thus the theorem is proved with
$\gamma=\varepsilon$ .\end{proof}

A closer examination of this proof shows that a better choice
would be to use Theorem~\ref{sidon} only when $m<(2+2\log
\kappa)^{-1}\log n$. By this approach and taking into account the
estimates from Lemmas~\ref{SC-S} and \ref{TSidon}, we get
\[
b(n)\ge \frac{1}{\sqrt{2e(1+\log \kappa)}}+o(1)
\]
when $n\to \infty.$ By also doing a meticulous analysis of
\eqref{bigm} for ``small'' $n$ and keeping in mind that $S(1,n)=1$,
one may arrive at a numerical value for $\gamma$ which is larger
than $0.2$.

\end{document}